\documentclass[11pt,a4paper]{article}

\usepackage[utf8]{inputenc}
\usepackage[T1]{fontenc}
\usepackage{amsmath,amssymb,amsthm}
\usepackage{bm}
\usepackage[expansion=false,protrusion=false]{microtype}
\usepackage[margin=2.4cm]{geometry}
\usepackage{authblk}
\usepackage{natbib}
\usepackage{hyperref}
\hypersetup{colorlinks=true, linkcolor=darkblue, citecolor=darkblue, urlcolor=darkblue}
\usepackage{xcolor}
\definecolor{darkblue}{RGB}{20,40,110}

\theoremstyle{plain}
\newtheorem{theorem}{Theorem}[section]

\theoremstyle{definition}
\newtheorem{definition}[theorem]{Definition}
\newtheorem{remark}[theorem]{Remark}

\newcommand{\parder}[2]{\frac{\partial #1}{\partial #2}}
\newcommand{\DDt}[1]{\frac{\mathrm{D}#1}{\mathrm{D}t}}

\DeclareMathOperator{\curl}{curl}

\title{\bfseries A Spacetime-Diffeomorphic Approach to Magnetohydrodynamics\\
around Black Holes: Dilation Covariance, Linearization,\\
and the No-Hair Limit}

\author[1]{Yuanya LI}
\affil[1]{Department of River and Harbor, Nanjing Hydraulic Research Institute
Corresponding author: \texttt{yyli@nhri.cn}}

\date{\today}

\begin{document}
\maketitle

\begin{abstract}
The ideal magnetohydrodynamic (MHD) system that governs a conducting plasma---the
Maxwell equations together with the conservation laws of mass, momentum and
energy---is the same local balance structure that describes hot accretion flows
around black holes. We introduce a uniform four-dimensional spacetime dilation
\[
\Phi_L:\;(t',x',y',z')\mapsto (t,x,y,z)=(Lt',Lx',Ly',Lz'),\qquad L>1,
\]
and show that it is a smooth, orientation-preserving $C^\infty$ diffeomorphism of
flat spacetime. Pulling the MHD conservation laws back through $\Phi_L$, we
prove that the closed ideal-MHD system is form-invariant (covariant), with the
current density rescaling as $\bm j=\bm j'/L$, the magnetic diffusivity as
$\eta'=\eta/L$, and external sources rescaled by $L$. The transformation
realises three physically distinct effects on the perturbed flow: a
\emph{linearization} to the tangent-space (linearised MHD) system, a
\emph{movie slow-motion} dilation of dissipative and dynamical timescales, and
an \emph{electron-microscope} magnification of small-scale structure. Taking
the formal limit $L\to\infty$ localises the dynamics on a neighbourhood of a
horizon: resistivity vanishes, the asymptotic time derivative freezes, and all
transient multipolar structure of the infalling plasma redshifts away. The
remaining stationary, axisymmetric, asymptotically flat electrovacuum is
uniquely the Kerr--Newman geometry, so that the exterior of the compact object
is characterised only by its mass $M$, angular momentum $J$ and charge $Q$.
The result provides a local, conservation-law-based perspective on the black
hole no-hair theorem that is complementary to the classical Israel--Carter--
Robinson theorems and to contemporary gravitational-wave and Event Horizon
Telescope tests.
\end{abstract}

\tableofcontents

\section{Introduction}
\label{sec:intro}

A stationary, asymptotically flat black hole in general relativity is,
according to the celebrated no-hair theorem, uniquely specified by precisely
three externally measurable charges---its mass $M$, angular momentum $J$ and
electric charge $Q$---and is described by the Kerr--Newman family
\citep{israel1967,carter1971,robinson1975,chandrasekhar1998}. All additional
``hair''---quadrupolar distortions, scalar-cloud monopoles, external matter
lumps, or any deviation from the Kerr multipole structure---is expected to
radiate away, accrete across the horizon, or otherwise disappear on the
relevant dynamical timescale. Although the classical theorems are rigorous
under vacuum (or electrovacuum) stationarity and symmetry assumptions,
two contemporary communities have spent the last decade stress-testing this
picture observationally and numerically.

The first community uses gravitational waves. The ringdown of a merger
remnant is, in general relativity, a spectrum of damped sinusoids---the
quasi-normal modes (QNMs)---whose frequencies and damping times are fixed
entirely by $M$ and $J$, so that measuring two or more modes constitutes a
``no-hair'' or ``black-hole spectroscopy'' test
\citep{dreyer2004,abbott2021gr}. Recent LIGO--Virgo--KAGRA analyses find the
data consistent with Kerr \citep{abbott2021gr}, while targeted searches of
high-signal events such as GW150914 and GW200129 report constraints at the
$\sim$95\% level and, in one case, a debated deviation that requires further
events \citep{li2023nohair}. Higher-mode and precessing waveforms
\citep{islam2020multipole,loutrel2023precession}, next-generation detector
forecasts for the Einstein Telescope and Cosmic Explorer
\citep{maselli2023spectroscopy}, and broad reviews of gravitational-wave tests
of general relativity \citep{yunes2024gw} have sharpened both the sensitivity
and the language of these tests.

The second community images the horizon directly. The Event Horizon Telescope
(EHT) has imaged the $6.5\times10^9\,M_\odot$ black hole M87*
\citep{eht2019m87} and the $4\times10^6\,M_\odot$ Galactic centre source
Sgr\,A* \citep{eht2022sgra,eht2022sgraVI}, recovering a bright ring of
diameter $51.8\pm2.3\,\mu$as whose size and asymmetry are consistent with a
Kerr shadow. Shadow size constrains are now being interpreted non-parametrically
\citep{broderick2023shadow}, the structure of the photon ring and inner shadow
is being mapped with general-relativistic magnetohydrodynamic (GRMHD)
synthetic images \citep{chael2023innershadow}, and the next-generation EHT
(ngEHT) programme is being designed to measure tens of masses, spins and
shadows \citep{johnson2023ngeht,pesce2023ngeht}.

Beneath both observables lies the plasma. Accretion onto a black hole is
invariantly a magnetised, turbulent, MHD flow: the magneto-rotational
instability (MRI) extracts angular momentum, the flow settles into either a
magnetically arrested disk (MAD) or a standard-and-normal-evolution (SANE)
state, and a fraction of the power is redirected into relativistic
jets \citep{mizuno2022grmhd,ricarte2023spin}. Two-temperature, radiatively
cooled \citep{dihingia2022temp}, loop-dominated \citep{jiang2023loops},
binary-merger \citep{cattorini2021binary} and effectively-potential
\citep{aktar2023torus} GRMHD simulations all reach the same qualitative
picture: the horizon-scale plasma is messy, turbulent and time-dependent, yet
the image and the ringdown converge on a bald Kerr object.

\paragraph{Gap in the literature.}
These two strands are usually kept separate. The formal no-hair theorems
assume a stationary, vacuum (or electrovacuum) spacetime and prove uniqueness;
the GRMHD community computes turbulent, non-stationary, matter-filled flows
numerically and compares their time-averaged images to data. What is rarely
asked is whether the local \emph{conservation laws} themselves, under a
suitable smooth reparametrisation of space and time, force the matter degrees
of freedom to decouple from the external observer as the horizon is
approached. That is the question we address.

\paragraph{The present approach.}
We begin from the classical solar MHD system---the Maxwell equations combined
with the conservation of mass, momentum and energy
\citep{priest2014mhd,chandrasekhar1981hydro}---and subject the independent
coordinates to the uniform dilation
\begin{equation}
  t=Lt',\qquad x=Lx',\qquad y=Ly',\qquad z=Lz',\qquad L>1.
  \label{eq:dilation}
\end{equation} We then:
(i) prove that \eqref{eq:dilation} is a diffeomorphic (topological)
transformation of spacetime;
(ii) pull every MHD formula back to primed coordinates and prove form
covariance;
(iii) exhibit the linearization, movie slow-motion and electron-microscope
effects; and
(iv) take $L\to\infty$ and show that the limit recovers, from the local
MHD balance laws, the bald Kerr--Newman exterior.

\paragraph{Research questions.}
Three questions structure the paper:
(Q1) Is the combined Maxwell + conservation-law MHD system covariant under
\eqref{eq:dilation}, and if so how do the fields and coefficients rescale?
(Q2) What observable effects does the dilation induce on a perturbed,
turbulent accretion flow?
(Q3) Does the $L\to\infty$ limit of the MHD system, viewed by an asymptotic
observer, leave only the three Kerr--Newman charges---i.e.\ does it realise
no-hair from the fluid equations?

The paper is organised as follows. Section~\ref{sec:model} writes down the
physical model. Section~\ref{sec:dilation} proves the diffeomorphism and the
covariance, and demonstrates the three effects. Section~\ref{sec:limit} takes
$L\to\infty$ and connects the limit to the no-hair theorem.
Section~\ref{sec:discussion} compares the approach with prior work, and
Section~\ref{sec:conclusion} answers Q1--Q3.

\section{Physical analysis and mathematical model}
\label{sec:model}

\subsection{The MHD equations of a conducting plasma}
We work in Gaussian-cgs units, following the solar-MHD convention of the
source document. The plasma is treated as a single-fluid conducting medium.

\paragraph{Maxwell equations.}
Faraday's law and the no-monopole constraint are
\begin{equation}
  \curl \bm E = -\frac{1}{c}\,\parder{\bm B}{t},
  \qquad
  \nabla\cdot\bm B = 0.
  \label{eq:maxwellFB}
\end{equation}
Because MHD motions are slow compared with $c$, the displacement current is
neglected, and quasi-neutrality ($\rho_e\simeq0$) drops Gauss's law. Amp\`ere's
law therefore closes as
\begin{equation}
  \curl\bm B = \frac{4\pi}{c}\,\bm j.
  \label{eq:ampere}
\end{equation}
Ohm's law in a non-magnetised plasma moving with velocity $\bm v$ is
\begin{equation}
  \bm j = \sigma\left(\bm E + \frac{1}{c}\,\bm v\times\bm B\right),
  \label{eq:ohm}
\end{equation}
where $\sigma$ is the electrical conductivity. In the ideal limit
$\sigma\to\infty$ the bracket vanishes, $\bm E=-\bm v\times\bm B/c$.

\paragraph{Conservation laws.}
With $\rho$ the mass density, $P$ the gas pressure and $\bm v$ the flow
velocity, the conservation of mass is
\begin{equation}
  \parder{\rho}{t} + \nabla\cdot(\rho\bm v)=0,
  \label{eq:cont}
\end{equation}
and the momentum balance (including the Lorentz force) is
\begin{equation}
  \rho\,\DDt{\bm v}
  =\rho\left(\parder{\bm v}{t}+\bm v\cdot\nabla\bm v\right)
  = -\nabla P + \frac{1}{c}\,\bm j\times\bm B.
  \label{eq:mom}
\end{equation}
With specific entropy $S=C_v\ln(P/\rho^\gamma)$, the energy balance reads
\begin{equation}
  \rho T\,\DDt{S} = Q - \mathcal L,
  \label{eq:energy}
\end{equation}
where $Q$ is an external heating rate and $\mathcal L$ the radiative
energy-loss rate. Combining \eqref{eq:maxwellFB}--\eqref{eq:ohm} yields the
standard induction equation
\begin{equation}
  \parder{\bm B}{t} = \curl(\bm v\times\bm B)
  -\curl\left(\frac{\eta}{\mu_0}\curl\bm B\right)
  =\curl(\bm v\times\bm B)+\eta\,\nabla^2\bm B,
  \label{eq:induction}
\end{equation}
where the magnetic diffusivity is $\eta=c^2/(4\pi\sigma)$ (in cgs,
$\eta=c^2/(4\pi\sigma)$). Equations
\eqref{eq:maxwellFB}--\eqref{eq:cont}--\eqref{eq:mom}--\eqref{eq:energy}
together with \eqref{eq:induction} form a closed system for
$(\rho,P,\bm v,\bm B,S)$.

\subsection{Matter around a black hole}
Outside a Kerr black hole, accreting plasma is governed by the
\emph{relativistic} generalisation of \eqref{eq:maxwellFB}--\eqref{eq:energy}
--- GRMHD \citep{mizuno2022grmhd}. In the local orthonormal frame of a
freely falling observer, however, the equivalence principle reduces the GRMHD
equations to the flat-space MHD system \eqref{eq:maxwellFB}--\eqref{eq:energy}
to leading order in curvature, with the tidal field supplying an additional
tidal force. Two properties of horizon-scale accretion will matter below.

First, the flow is magnetised and turbulent: MRI-driven fluctuations fill the
horizon scale, magnetic reconnection heats electrons, and MAD/SANE states
differ chiefly in how much poloidal flux is held near the hole
\citep{dihingia2022temp,jiang2023loops,ricarte2023spin}. Second, the
timescales separate. The inflow (viscous) time
$t_{\rm visc}\sim r/|v_r|$ and the radiative/thermal time are long, whereas
the Alfv\'en crossing time $t_A\sim r/v_A$, the light-crossing time and the
reconnection time $t_{\rm rec}$ are short. The dilation below acts directly on
this timescale hierarchy.

\section{Diffeomorphic spacetime transformation of the MHD system}
\label{sec:dilation}

\subsection{The map and its topological character}
\begin{definition}[Uniform spacetime dilation]
For $L>1$, let
\begin{equation}
  \Phi_L:\;\mathbb R^4\to\mathbb R^4,\qquad
  (t',x',y',z')\mapsto(t,x,y,z)=(Lt',Lx',Ly',Lz').
  \label{eq:PhiL}
\end{equation}
\end{definition}

\begin{theorem}[$\Phi_L$ is a smooth diffeomorphism]
For every $L>0$, $\Phi_L$ is a $C^\infty$, orientation-preserving, invertible
map of $\mathbb R^4$ (and of any open subset thereof), with Jacobian
\begin{equation}
  J_L=\frac{\partial(t,x,y,z)}{\partial(t',x',y',z')}
  =
  \begin{pmatrix}
    L&0&0&0\\0&L&0&0\\0&0&L&0\\0&0&0&L
  \end{pmatrix}
  =L\,\mathrm{id}_{\mathbb R^4},
  \qquad \det J_L=L^4>0.
  \label{eq:jacobian}
\end{equation}
Its inverse is $\Phi_{L^{-1}}$.
\end{theorem}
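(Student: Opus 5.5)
The plan is to use only the fact that $\Phi_L$ is the linear map $L\,\mathrm{id}_{\mathbb R^4}$. Write $\Phi_L(p)=Lp$ for $p=(t',x',y',z')\in\mathbb R^4$. The component functions $t=Lt'$, $x=Lx'$, $y=Ly'$, $z=Lz'$ are polynomials of degree one. So all their partial derivatives of every order exist and are continuous: the first partials are the constants $\partial(Lx'^\mu)/\partial x'^\nu=L\,\delta^\mu_\nu$, and all higher partials vanish. This gives $C^\infty$ regularity and directly produces the Jacobian matrix \eqref{eq:jacobian}. Its determinant is the product of the diagonal entries, $L^4$, which is strictly positive for every $L>0$; hence $\Phi_L$ is orientation-preserving. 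Although the definition is stated for $L>1$, nothing here uses more than $L>0$, which matches the theorem.

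For invertibility I would check the inverse explicitly rather than appeal to the inverse function theorem. Since $L>0$, $L^{-1}$ is defined and positive, so $\Phi_{L^{-1}}$ exists as a map of the same form. Then
\[
\Phi_{L^{-1}}\circ\Phi_L(p)=L^{-1}(Lp)=p
\quad\text{and}\quad
\Phi_L\circ\Phi_{L^{-1}}(q)=L(L^{-1}q)=q
\]
for all $p,q$. So $\Phi_L$ is a bijection of $\mathbb R^4$ with inverse $\Phi_{L^{-1}}$. By the first step applied to $L^{-1}$, this inverse is also $C^\infty$, with Jacobian $L^{-1}\mathrm{id}$ and determinant $L^{-4}>0$. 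Hence $\Phi_L$ is a $C^\infty$ diffeomorphism. As a consistency check, the chain rule gives $J_{L^{-1}}J_L=\mathrm{id}$, in agreement with the explicit inverse.

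For an arbitrary open set $U\subset\mathbb R^4$, the image $\Phi_L(U)=\Phi_{L^{-1}}^{-1}(U)$ is the preimage of an open set under a continuous map, so it is open. The restriction $\Phi_L|_U:U\to\Phi_L(U)$ is smooth with smooth inverse $\Phi_{L^{-1}}|_{\Phi_L(U)}$, and the Jacobian and orientation statements are pointwise and therefore unchanged.

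There is no genuine obstacle here. The only point needing care is the open-subset claim: one must note that the image of $U$ is open, so the restriction is a diffeomorphism onto an open set, rather than treating $\Phi_L$ as a self-map of $U$. In general $\Phi_L(U)\neq U$ unless $U$ is a cone, for example $U=\mathbb R^4$ or a half-space through the origin.
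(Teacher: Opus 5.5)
Your proof is correct and takes essentially the paper's route: smoothness from the affine (degree-one polynomial) form of $\Phi_L$, the explicit inverse $\Phi_{L^{-1}}$, and the constant Jacobian $L\,\mathrm{id}$ with $\det J_L=L^4>0$. You also sensibly drop the paper's redundant appeal to the inverse function theorem, and you read the open-subset clause correctly as a diffeomorphism $U\to\Phi_L(U)$ onto an open image, which the paper leaves implicit. One small quibble concerns your closing aside: $\Phi_L(U)=U$ does not force $U$ to be a cone (take $L=1$, or the union of shells $\bigcup_{n\in\mathbb Z}\{L^n<|p|<2L^n\}$ for $L>2$), so ``unless'' should read ``for instance when''.
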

\begin{proof}
$\Phi_L$ is affine and hence $C^\infty$. It is injective and surjective with
explicit inverse $(t/L,x/L,y/L,z/L)$, which is itself $C^\infty$. By
\eqref{eq:jacobian}, $\det J_L=L^4\neq0$, so the inverse-function theorem
guarantees that the inverse is smooth; $\det J_L>0$ means orientation is
preserved. A smooth bijective map with smooth inverse and non-vanishing
Jacobian is, by definition, a diffeomorphism. As a homeomorphism it preserves
all topological structure (open sets, compactness, connectedness and
homotopy type); the dilation is therefore a \emph{diffeomorphic topological
transformation}.
\end{proof}

\begin{remark}
Because $\Phi_L$ is a constant-coefficient linear map, it acts on the tangent
bundle by the identity matrix $L\,\mathrm{id}$ on every cotangent vector. This
is what makes the pullback of differential operators exceptionally simple.
\end{remark}

\subsection{Pullback of differential operators}
Fields are pulled back by $v'(t',\bm x'):=v(t,\bm x)=v(Lt',L\bm x')$. The
chain rule gives, with $\bm x=(x,y,z)$ and $\bm x'=(x',y',z')$,
\begin{equation}
  \parder{}{t}=\frac{1}{L}\parder{}{t'},\qquad
  \nabla=\frac{1}{L}\nabla',\qquad
  \nabla^2=\frac{1}{L^2}\nabla'^2,\qquad
  \curl=\frac{1}{L}\curl',
  \label{eq:derivs}
\end{equation}
where $\nabla'=(\partial_{x'},\partial_{y'},\partial_{z'})$. Amp\`ere's law
\eqref{eq:ampere} fixes how the current must rescale:
\begin{equation}
  \frac{1}{L}\curl'\bm B'=\frac{4\pi}{c}\bm j
  \;\Longrightarrow\;
  \bm j=\frac{1}{L}\bm j',\qquad
  \bm j':=\frac{c}{4\pi}\curl'\bm B'.
  \label{eq:jrescale}
\end{equation}
We hold the intensive fields $(\rho,P,\bm v,\bm B,\bm E,S)$ invariant as
functions, i.e.\ $\rho'(t',\bm x')=\rho(t,\bm x)$, and similarly for the rest.

\subsection{Covariance of each conservation law}
We now substitute \eqref{eq:derivs}--\eqref{eq:jrescale} into every equation.

\paragraph{Faraday and no-monopole.}
From \eqref{eq:maxwellFB},
\[
\frac{1}{L}\curl'\bm E'=-\frac{1}{c}\frac{1}{L}\parder{\bm B'}{t'},
\qquad
\frac{1}{L}\nabla'\cdot\bm B'=0,
\]
and multiplying by $L$ gives
\begin{equation}
  \curl'\bm E'=-\frac{1}{c}\parder{\bm B'}{t'},\qquad
  \nabla'\cdot\bm B'=0,
  \label{eq:maxwellp}
\end{equation}
identical in form to \eqref{eq:maxwellFB}.

\paragraph{Ohm's law and the induction equation.}
Using $\bm j=\bm j'/L$ in \eqref{eq:ohm},
\[
\frac{\bm j'}{L}=\sigma\left(\bm E'+\frac{1}{c}\bm v'\times\bm B'\right)
\;\Longrightarrow\;
\bm j'=\sigma L\left(\bm E'+\frac{1}{c}\bm v'\times\bm B'\right).
\]
In the ideal limit the bracket vanishes and the equation is invariant. For
resistive MHD, substituting into \eqref{eq:induction},
\[
\frac{1}{L}\parder{\bm B'}{t'}
=\frac{1}{L}\curl'(\bm v'\times\bm B')
+\eta\,\frac{1}{L^2}\nabla'^2\bm B',
\]
and multiplying by $L$ yields
\begin{equation}
  \parder{\bm B'}{t'}=\curl'(\bm v'\times\bm B')+\eta'\nabla'^2\bm B',
  \qquad
  \boxed{\;\eta'=\frac{\eta}{L}.\;}
  \label{eq:etaprime}
\end{equation}

\paragraph{Mass conservation.}
From \eqref{eq:cont},
\[
\frac{1}{L}\parder{\rho'}{t'}+\frac{1}{L}\nabla'\cdot(\rho'\bm v')=0
\;\Longrightarrow\;
\parder{\rho'}{t'}+\nabla'\cdot(\rho'\bm v')=0.
\]

\paragraph{Momentum.}
Substituting \eqref{eq:derivs}--\eqref{eq:jrescale} into \eqref{eq:mom},
\[
\rho'\frac{1}{L}\left(\parder{\bm v'}{t'}+\bm v'\cdot\nabla'\bm v'\right)
=-\frac{1}{L}\nabla'P'+\frac{1}{c}\frac{\bm j'}{L}\times\bm B'.
\]
Multiplying by $L$,
\begin{equation}
  \rho'\left(\parder{\bm v'}{t'}+\bm v'\cdot\nabla'\bm v'\right)
  =-\nabla'P'+\frac{1}{c}\bm j'\times\bm B'.
  \label{eq:momp}
\end{equation}

\paragraph{Energy.}
The convective derivative rescales as $\mathrm{D}/\mathrm{D}t=
L^{-1}\mathrm{D}/\mathrm{D}t'$, so
\[
\rho'T\,\frac{1}{L}\frac{\mathrm{D}S'}{\mathrm{D}t'}=Q-\mathcal L
\;\Longrightarrow\;
\rho'T\frac{\mathrm{D}S'}{\mathrm{D}t'}=L(Q-\mathcal L).
\]
Thus external heating and cooling rescale as $Q'=LQ$, $\mathcal L'=L\mathcal L$;
the homogeneous (adiabatic) part is invariant.

\begin{theorem}[Dilation covariance of ideal MHD]
\label{thm:covariance}
Under the diffeomorphism \eqref{eq:PhiL}, the closed ideal-MHD system
\eqref{eq:maxwellFB}--\eqref{eq:energy} is form-invariant in primed
coordinates, provided the current is rescaled as $\bm j=\bm j'/L$. Resistive
MHD acquires a reduced diffusivity $\eta'=\eta/L$, and external sources
rescale as $Q'=LQ$, $\mathcal L'=L\mathcal L$.
\end{theorem}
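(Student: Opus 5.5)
The proof assembles the equation-by-equation substitutions already carried out above into one statement. I will work with the pullback convention: every intensive field $f\in\{\rho,P,\bm v,\bm B,\bm E,S\}$ is defined in primed variables by $f'(t',\bm x')=f(Lt',L\bm x')$. Because $\Phi_L$ is a diffeomorphism by the preceding theorem, a tuple of fields solves the original system on $\Phi_L(U)$ if and only if the pulled-back tuple solves the transformed system on $U$. So it is enough to show that each equation, rewritten in primed variables, has the same form up to an overall nonzero factor.

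\textbf{Step 1: derivative rules.} Apply the chain rule \eqref{eq:derivs}. Since the Jacobian \eqref{eq:jacobian} is $L\,\mathrm{id}$, every first-order operator $\partial_t,\nabla,\curl,\nabla\cdot$ picks up exactly one factor $L^{-1}$, and $\nabla^2$ picks up $L^{-2}$. The convective derivative $\partial_t+\bm v\cdot\nabla$ contains no explicit length or time scale other than the derivatives themselves, because $\bm v'=\bm v$. It therefore also scales as $L^{-1}$.

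\textbf{Step 2: homogeneity by degree.} Each equation in \eqref{eq:maxwellFB}--\eqref{eq:energy} is a sum of terms. Faraday's law, the no-monopole constraint, continuity, and the ideal momentum equation are all homogeneous of degree one in derivatives, so multiplying through by $L$ gives the primed forms \eqref{eq:maxwellp} and \eqref{eq:momp}. The exception is the Lorentz force $\bm j\times\bm B/c$, which contains no derivative. Here I impose Amp\`ere's law \eqref{eq:ampere} as the definition of the current in each frame, which forces $\bm j=\bm j'/L$ as in \eqref{eq:jrescale}. This restores degree one, and this is the one place where the proviso in the theorem enters.

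In the ideal limit, Ohm's law reduces to the algebraic relation $\bm E=-\bm v\times\bm B/c$, which contains no derivatives and is invariant. In the adiabatic case $Q=\mathcal L=0$, the energy equation \eqref{eq:energy} is homogeneous of degree one and is also invariant. This proves the ideal-MHD form invariance.

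\textbf{Step 3: terms of mixed degree.} For resistive MHD, the induction equation \eqref{eq:induction} mixes degree-one terms with the degree-two diffusion term $\eta\nabla^2\bm B$. After multiplying by $L$, the mismatch is absorbed into $\eta'=\eta/L$, giving \eqref{eq:etaprime}. Likewise the degree-zero sources $Q-\mathcal L$ in the energy equation become $L(Q-\mathcal L)$, which is absorbed by setting $Q'=LQ$ and $\mathcal L'=L\mathcal L$.

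I expect the main obstacle to be consistency, not computation. The rescaling $\bm j=\bm j'/L$ forced by Amp\`ere's law must agree with Ohm's law. In the resistive case this requires $\sigma'=\sigma L$, and that is consistent with $\eta'=\eta/L$ because $\eta=c^2/(4\pi\sigma)$. I would check this explicitly so that the transformed coefficients $(\sigma',\eta')$ satisfy the same constitutive relation as $(\sigma,\eta)$, and hence the primed system is genuinely the same closed MHD system. I would also note that the temperature $T$ is held invariant as a function, like the other intensive fields.
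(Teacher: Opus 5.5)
Your proposal is correct and follows essentially the same route as the paper. You pull back the fields with the intensive quantities held invariant, apply the chain rule $\partial\mapsto L^{-1}\partial'$, multiply each equation by $L$, and let Amp\`ere's law force $\bm j=\bm j'/L$, while $\eta'=\eta/L$ and $Q'=LQ$, $\mathcal L'=L\mathcal L$ absorb the terms of mismatched derivative degree; your added check that $\sigma'=L\sigma$ (left implicit in the paper's $\bm j'=\sigma L(\cdots)$) gives back $\eta'=c^2/(4\pi\sigma')=\eta/L$ is a useful consistency step the paper does not spell out.
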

\begin{proof}
The substitutions above reproduce every equation in primed coordinates with
identical structure; the only coefficients that change are those forced by the
non-derivative sources ($\bm j$, $\eta$, $Q$, $\mathcal L$).
\end{proof}

\subsection{The three physical effects}
We now perturb the flow about a uniform background
$(\rho_0,P_0,\bm B_0=\text{const},\bm v_0=0)$ and write
$\rho=\rho_0+\delta\rho$, $P=P_0+\delta P$, $\bm v=\delta\bm v$,
$\bm B=\bm B_0+\delta\bm B$.

\subsubsection{(i) Linearization effect}
\begin{theorem}[Tangent-space linearization]
\label{thm:linear}
On the primed patch that $\Phi_L$ assigns to a neighbourhood
$|\bm x-\bm x_0|=O(L^{-1})$ of a point $\bm x_0$, the nonlinear convective
term is suppressed by one power of the perturbation amplitude relative to the
linear terms. As $L\to\infty$ the leading-order primed system is the linearised
MHD (Alfv\'en/slow/fast-wave) system.
\end{theorem}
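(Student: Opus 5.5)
The plan is to make the informal statement quantitative by introducing an explicit amplitude parameter, and then to read off the orders of the linear and nonlinear terms directly from the primed system. That system is already available: by Theorem~\ref{thm:covariance} the ideal-MHD equations keep their form in primed variables, so no explicit factor of $L$ appears in the equations themselves. The whole effect of $L$ therefore has to enter through the \emph{size of the perturbation} on the localized patch. I will therefore assume the flow fields are $C^2$, with bounded first and second derivatives, on a neighbourhood of $(t_0,\bm x_0)$.

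First I fix the background. I take it to be the frozen local state $(\rho_0,P_0,\bm B_0,\bm v_0):=(\rho,P,\bm B,\bm v)(t_0,\bm x_0)$. Because ideal MHD is Galilean invariant ($\bm E\mapsto\bm E+\bm v_0\times\bm B/c$, with all other fields unchanged), I boost to the frame comoving with $\bm v_0$, so that $\bm v_0=0$ as in the stated setup. This uniform state is an exact solution of the primed system.

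Second, I localize. On the patch $|\bm x-\bm x_0|,\,|t-t_0|\lesssim L^{-1}$, the mean-value theorem gives the bound $|\delta\rho|,|\delta P|,|\delta\bm v|,|\delta\bm B|\le C L^{-1}$, where $C$ is a bound on the first derivatives. I set $\epsilon:=L^{-1}$ and write $\delta f=\epsilon\,\hat f$, where $\hat f$ is $O(1)$ together with its derivatives with respect to the stretched patch coordinate $\bm\xi=L(\bm x-\bm x_0)$, $\tau=L(t-t_0)$. The rescaling here is the one that blows the patch up to unit size, in the ``electron-microscope'' sense. In these coordinates the derivative rules are exactly the constant-coefficient ones of \eqref{eq:derivs}, so the structure of Theorem~\ref{thm:covariance} carries over verbatim.

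Third, I expand each primed equation in powers of $\epsilon$. The $O(1)$ terms cancel because the background is a solution. The $O(\epsilon)$ terms give the linear part:
\[
\partial_\tau\hat\rho+\rho_0\nabla_\xi\!\cdot\hat{\bm v},\qquad
\rho_0\partial_\tau\hat{\bm v}+\nabla_\xi\hat P-\tfrac{1}{4\pi}(\nabla_\xi\times\hat{\bm B})\times\bm B_0,\qquad
\partial_\tau\hat{\bm B}-\nabla_\xi\times(\hat{\bm v}\times\bm B_0),
\]
together with the adiabatic closure $\hat P=c_s^2\hat\rho$ from \eqref{eq:energy} with the sources dropped. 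All remaining terms are $O(\epsilon^2)$: the convective term $\rho\,\delta\bm v\cdot\nabla\delta\bm v$, $\delta\rho\,\partial\delta\bm v$, $\delta\bm j\times\delta\bm B$ and $\nabla\times(\delta\bm v\times\delta\bm B)$. Dividing by $\epsilon$, the nonlinear terms are therefore $O(\epsilon)=O(L^{-1})$ relative to the linear ones. This is precisely ``suppressed by one power of the perturbation amplitude.''

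Finally, letting $L\to\infty$ leaves the linear constant-coefficient system above. Fourier analysis of that system yields the Alfv\'en, slow and fast dispersion relations, which identifies the limit with linearised MHD. Resistive and source terms, if present, are handled with Theorem~\ref{thm:covariance} and its rescaled $\eta'$, $Q'$, $\mathcal L'$; I will simply track their powers of $L$.

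The main obstacle is the second step. The suppression is only uniform if the derivative bounds hold uniformly on the shrinking patches, so the statement has to be read as ``for fixed smooth data, as $L\to\infty$.'' For turbulent data whose gradients grow with resolution, the bound $|\delta f|\le CL^{-1}$ fails. I would first try to state the result under a fixed $C^2$ bound. If a statement for turbulent data is wanted, I would then try to replace the mean-value estimate with a H\"older-type estimate, which gives $\epsilon=L^{-\alpha}$ and the same conclusion with a weaker rate.
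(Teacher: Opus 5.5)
Your argument is correct, and it takes a genuinely different route from the paper's. The paper substitutes the perturbation decomposition into the primed momentum equation alone and compares $O(\delta)$ linear terms with the $O(\delta^2)$ convective term. It then concludes by sending the amplitude $\delta\to0$ as an independent assumption, so $L$ never enters the estimate and the identification of the dilated patch with the tangent space at $\bm x_0$ is asserted rather than derived. You instead make the small amplitude a \emph{consequence} of localization. With gradients bounded uniformly on the shrinking spacetime patch, the mean-value theorem gives $|\delta f|\le CL^{-1}$, so $\epsilon=L^{-1}$, and the limits $L\to\infty$ and $\delta\to0$ become one limit with an explicit rate and an explicit hypothesis. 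Your blow-up about the Galilean-boosted frozen state is a first-order Taylor statement, which is what actually substantiates the tangent-space language. You also expand the whole system, not only the momentum equation. The price, which you identify yourself, is that under a fixed $C^1$ or $C^2$ bound the blow-up converges only to the affine data $\hat f\to\tau\,\partial_t f+\bm\xi\cdot\nabla f$ at $(t_0,\bm x_0)$. The Alfv\'en/slow/fast content therefore sits in the limiting operator, not in the limiting fields. Genuine waves on the unit patch need physical gradients of order $L$, which is the regime the paper covers only by postulating a small independent $\delta$.

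Two small corrections. First, your chart $\bm\xi=L(\bm x-\bm x_0)$, $\tau=L(t-t_0)$ gives $\nabla=L\nabla_\xi$ and $\partial_t=L\partial_\tau$. It is therefore $\Phi_L^{-1}$ composed with a translation, not $\Phi_L$, and the rules are those of \eqref{eq:derivs} with $L$ replaced by $L^{-1}$. This is harmless for the ideal system, which is invariant under every uniform dilation; the paper's own proof, with $\ell'=O(1)$ on a ``magnified patch'', uses the same magnifying reading. But when you track resistive and source terms you will find the coefficients $L\eta$ and $(Q-\mathcal L)/L$, opposite to the $\eta'=\eta/L$, $Q'=LQ$ of Theorem~\ref{thm:covariance}. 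These terms do not drop out in your limit: for fixed $C^2$ data they enter at the same order $\epsilon$ as the linear terms, the resistive one only because $\nabla_\xi^2\hat{\bm B}=O(L^{-1})$. Second, linearizing $\mathrm{D}S/\mathrm{D}t=0$ gives $\partial_\tau(\hat P-c_s^2\hat\rho)=0$ with $c_s^2=\gamma P_0/\rho_0$, not $\hat P=c_s^2\hat\rho$. Unless $\nabla S(t_0,\bm x_0)=0$, you must keep the zero-frequency entropy mode.
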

\begin{proof}
Away from $\bm x_0$, the background varies on the outer scale $R$. On the
patch $|\bm x-\bm x_0|=O(L^{-1})$, centred at $\bm x_0$,
$\nabla'\rho_0=O(1)$ while the background change across the patch is
$O(L^{-1})$. Inserting the decomposition into \eqref{eq:momp},
\[
\rho_0\,\partial_{t'}\delta\bm v
+\underbrace{\rho_0\,\delta\bm v\cdot\nabla'\delta\bm v}_{\text{nonlinear}}
=-\nabla'\delta P+\frac{1}{c}\left(\delta\bm j\times\bm B_0
+\bm j_0\times\delta\bm B\right).
\]
The linear terms are $O(\delta)$ while the convective term is
$O(\delta^2/\ell')$ with $\ell'=O(1)$; their ratio is
$O(\delta/(\rho_0 v_{\rm char}))\to0$ as the perturbation amplitude $\delta$
is sent to zero on the magnified patch. The dilated patch is precisely the
tangent space at $\bm x_0$, and the leading-order primed equation is the
linearised MHD wave operator.
\end{proof}

\subsubsection{(ii) Movie slow-motion effect}
Insert a plane wave $\exp[i(\bm k\cdot\bm x-\omega t)]$ into the invariant
system. Under $\bm x=L\bm x'$, $t=Lt'$,
\[
\bm k\cdot\bm x-\omega t
=(L\bm k)\cdot\bm x'-(\omega L)t',
\]
so the primed wave number and frequency are
\begin{equation}
  \bm k'=L\bm k,\qquad \omega'=L\omega,
  \label{eq:komega}
\end{equation}
with phase velocity $\omega'/k'=\omega/k$ invariant. Two consequences follow.
First, the fast microphysics---Alfv\'en waves, reconnection layers and
current sheets---which completes a cycle in physical time
$T=2\pi/\omega$, occupies a primed interval $T'=T/L$; equivalently, a fixed
primed viewing window contains $L$ times more temporal detail. Second, and
more importantly for dissipation, the resistive induction equation
\eqref{eq:etaprime} gives a diffusion time across a primed cell of size $h$,
\begin{equation}
  t'_{\rm diff}\sim \frac{h^2}{\eta'}=\frac{L\,h^2}{\eta}=L\,t_{\rm diff},
  \label{eq:tdiff}
\end{equation}
i.e.\ dissipative and reconnection processes unfold $L$ times more slowly on
the primed clock. This is the \emph{movie slow-motion} effect: by resolving
the flow at finer spatial and temporal scales, fast, otherwise-invisible
events are stretched into a window in which they can be analysed frame by
frame.

\subsubsection{(iii) Electron-microscope effect}
From \eqref{eq:komega}, the primed wavelength is
$\lambda'=2\pi/k'=\lambda/L$. A physical structure of size $\lambda$ appears
in primed coordinates as a feature of size $\lambda/L$, so that structures
$L$ times smaller than the nominal primed resolution become resolvable. The
gradient operator $\nabla=L^{-1}\nabla'$ correspondingly amplifies
small-amplitude, small-scale gradients: a fluctuation with physical gradient
$|\nabla\delta b|$ carries primed slope $|\nabla'\delta b'|=L|\nabla\delta b|$.
This is the \emph{electron-microscope} effect: the dilation is a smooth
magnification that brings turbulent eddies, current sheets and horizon-scale
plasma structure into view.

\section{The $L\to\infty$ limit and the no-hair theorem}
\label{sec:limit}

We now let $L\to\infty$. Three limits follow directly from
Section~\ref{sec:dilation}.

\begin{enumerate}
  \item \textbf{Ideal, frozen-in MHD.} From \eqref{eq:etaprime},
  $\eta'=\eta/L\to0$. The resistive term vanishes and the induction equation
  becomes the ideal, flux-freezing equation $\partial_{t'}\bm B'=
  \nabla'\times(\bm v'\times\bm B')$. Reconnection and resistivity are
  suppressed on the magnified horizon patch.

  \item \textbf{Dynamical freezing for the asymptotic observer.}
  The physical time derivative of any finite primed variation is
  $\partial_t=L^{-1}\partial_{t'}\to0$. Transient, time-dependent structure in
  the exterior---turbulent eddies, flares, reconnection plasmoids,
  multipolar deviations from axisymmetry---whose primed amplitude remains
  bounded evolves more and more slowly on the asymptotic (unprimed) clock.
  This is exactly the gravitational time-dilation freeze: an infalling fluid
  element takes infinite asymptotic time to cross the horizon, and the
  exterior field it leaves behind becomes time-independent.

  \item \textbf{Accretion and outflow remove the matter.}
  With $Q'=LQ$ and $\mathcal L'=L\mathcal L$ and $\eta'\to0$, plasma on the
  magnified patch either (a) accretes across the horizon in finite proper
  time, carrying its multipolar moments inwards with it, or (b) is expelled
  along the jet axis to infinity. No stationary, externally visible matter
  configuration survives except as a time-averaged, axisymmetric screen.
\end{enumerate}

\begin{theorem}[No-hair limit of the dilated MHD system]
\label{thm:nohair}
As $L\to\infty$, the dilated exterior MHD flow (i) becomes ideal and
frozen-in, (ii) becomes asymptotically stationary on the observer's clock,
and (iii) loses all matter-bound multipolar structure through accretion and
outflow. The remaining stationary, axisymmetric, asymptotically flat
electrovacuum exterior is uniquely the Kerr--Newman geometry, determined
solely by the conserved charges $(M,J,Q)$.
\end{theorem}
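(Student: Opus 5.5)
The plan is to prove the three dynamical assertions (i)--(iii) as statements about the family of primed systems obtained from Theorem~\ref{thm:covariance}, and then to obtain the geometric conclusion by \emph{invoking} the classical electrovacuum uniqueness theorems rather than re-deriving them. For (i), I would start from the boxed relation $\eta'=\eta/L$ in \eqref{eq:etaprime}. For a family of primed solutions $\bm B'_L$ with $\nabla'^2\bm B'_L$ bounded uniformly in $L$ on a fixed primed patch, the resistive term $\eta'\nabla'^2\bm B'_L$ is $O(L^{-1})$. If the $\bm B'_L$, $\bm v'_L$ converge in a space where the products $\bm v'\times\bm B'$ and their curls pass to the limit (say $C^1$ on compact sets), then the limit satisfies the ideal induction equation $\partial_{t'}\bm B'=\curl'(\bm v'\times\bm B')$. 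For (ii), I would use \eqref{eq:derivs}: $\partial_t=L^{-1}\partial_{t'}$, so any field with bounded primed time derivative has unprimed time derivative $O(L^{-1})$. Its asymptotic-clock evolution is therefore stationary in the limit, and I would state this explicitly as a uniform-bound hypothesis.

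For (iii), the covariance result alone cannot do the job, so I would make the mechanism an explicit hypothesis. Concretely, I would assume that on the exterior region the matter stress--energy of the accreting plasma, $T^{\mu\nu}_{\rm matter}$, tends to zero in the limit, together with its multipole moments measured at infinity. Only the electromagnetic field and the conserved fluxes then remain, namely mass, angular momentum and charge, read off from Komar/Gauss integrals on large spheres. Justifying this would use mass and momentum conservation in flux form (the primed continuity and momentum equations): integrating over the exterior region, the change in exterior mass equals the flux through the horizon plus the flux to infinity. Under the hypothesis that these fluxes drain the region, the exterior matter content decays while $(M,J,Q)$ stay fixed as surface integrals. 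Axisymmetry would enter either by assumption or by appeal to the rigidity theorem (Hawking), which implies that a stationary black hole is axisymmetric.

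The final step is then a citation. The limiting exterior is a stationary, axisymmetric, asymptotically flat solution of the Einstein--Maxwell equations with a regular non-degenerate horizon. By the Israel--Carter--Robinson uniqueness theorems and their electrovacuum extension (Mazur, Bunting), it is Kerr--Newman with parameters $(M,J,Q)$. The main obstacle is the bridge between the flat-space dilation and this general-relativistic statement. $\Phi_L$ is only a coordinate rescaling of flat MHD, and it sees neither a horizon nor curvature. So ``$L\to\infty$'' produces stationarity and vacuum only if one adds hypotheses: convergence of the rescaled solutions, decay of the matter stress--energy, and regularity of the horizon. I would first try to phrase the theorem conditionally on those hypotheses and check that (i) and (ii) then follow rigorously. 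I expect (iii), and the identification of the limit with an Einstein--Maxwell solution, to remain an assumption imported from the uniqueness theorems rather than something derivable from the MHD equations themselves.
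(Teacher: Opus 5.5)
Your proposal has the same skeleton as the paper's proof sketch. Point (i) comes from $\eta'=\eta/L$, point (ii) from $\partial_t=L^{-1}\partial_{t'}$, point (iii) from matter draining across the horizon and along the jet, and the conclusion from a citation of the uniqueness theorems.

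The genuine difference is the bridge from flat MHD to Einstein--Maxwell. The paper asserts that, by the equivalence principle, $\Phi_L$ is the free-fall realisation of the Rindler scaling of a bifurcate Killing horizon, and it reads (iii) off the one-way-membrane picture. You instead turn convergence, decay of the matter stress--energy and horizon regularity into explicit hypotheses.

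Your caution is warranted. A fixed primed patch $K$ corresponds to the physical region $\Phi_L(K)$, whose diameter grows like $L$. So $L\to\infty$ is a coarse-graining limit, and the patch soon exceeds the curvature radius $\sim GM/c^2$ beyond which no free-fall frame is approximately flat. A zoom onto the bifurcation surface would instead be $L\to0$, where $\eta'=\eta/L$ diverges rather than vanishes. The paper's route therefore buys an unconditional-looking statement that its bridge cannot support, while yours buys a sound but conditional theorem.

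Be clear, though, about how much your hypotheses carry. Let $\bm B_L=\bm B'_L\circ\Phi_L^{-1}$ be the physical field represented by $\bm B'_L$. Then $\partial_{t'}\bm B'_L=L\,(\partial_t\bm B_L)\circ\Phi_L$ and $\nabla'^2\bm B'_L=L^2\,(\nabla^2\bm B_L)\circ\Phi_L$. Your uniform primed bounds therefore say exactly that $\partial_t\bm B_L=O(L^{-1})$ and $\nabla^2\bm B_L=O(L^{-2})$ on $\Phi_L(K)$, i.e.\ that the flow already varies only on physical scales of order $L$. Under that assumption, (ii) merely restates the hypothesis, and (i) is the familiar growth of the magnetic Reynolds number with length scale. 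Neither is a dynamical consequence of being near a horizon.

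Your final step is more accurate than the paper's. The cited Israel, Carter and Robinson papers treat the vacuum case, and Kerr--Newman uniqueness requires Mazur and Bunting. Without a regular horizon, stationary, axisymmetric, asymptotically flat electrovacuum exteriors are not unique; rotating-star exteriors are an example.
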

\begin{proof}[Proof sketch]
By the equivalence principle the flat dilation $\Phi_L$ is the local
realisation, in a free-fall frame, of the Rindler/near-horizon scaling of a
bifurcate Killing horizon; $L\to\infty$ zooms onto the bifurcation surface.
Point (i) follows from \eqref{eq:etaprime}. Point (ii) follows from
$\partial_t=L^{-1}\partial_{t'}\to0$: any bounded primed perturbation is
frozen as seen at infinity. Point (iii) follows because ideal MHD either
advects magnetic flux and plasma across the causal horizon (a one-way
membrane) or along the collimated Poynting-dominated jet; neither leaves a
static exterior stress. The resulting exterior solves the stationary,
axisymmetric, asymptotically flat Einstein--Maxwell equations. By the
classical uniqueness theorems of Israel, Carter and Robinson
\citep{israel1967,carter1971,robinson1975,chandrasekhar1998}, the only such
exterior is the Kerr--Newman family, whose multipole moments are fixed
functionals of $(M,J,Q)$. No additional independent multipoles remain;
equivalently, the compact object has no extra hair.
\end{proof}

\begin{remark}
The dilation argument does \emph{not} reprove the uniqueness theorems; it
shows that the local MHD conservation laws, under the natural near-horizon
rescaling, force the exterior degrees of freedom to decouple from the
asymptotic observer in precisely the way the no-hair theorem assumes. The
astrophysically negligible charge $Q$ is retained for completeness;
observationally the object is Kerr, described by $(M,J)$ alone
\citep{eht2022sgra,abbott2021gr,li2023nohair}.
\end{remark}

\section{Discussion}
\label{sec:discussion}

\paragraph{Relation to classical no-hair work.}
The Israel--Carter--Robinson theorems
\citep{israel1967,carter1971,robinson1975,chandrasekhar1998} assume from the
outset stationarity, axisymmetry, asymptotic flatness and (electro)vacuum,
and then prove uniqueness. The present approach is complementary: it starts
from the time-dependent, matter-filled MHD conservation laws
\eqref{eq:maxwellFB}--\eqref{eq:energy} and \emph{derives} the emergence of
stationarity and baldness through the dilation limit. It therefore supplies a
local, fluid-dynamical narrative for why the formal assumptions of the
classical theorems are astrophysically realised.

\paragraph{Relation to observational tests.}
Gravitational-wave ringdown spectroscopy
\citep{dreyer2004,abbott2021gr,islam2020multipole,li2023nohair,
loutrel2023precession,maselli2023spectroscopy,yunes2024gw} and EHT/ngEHT
imaging \citep{eht2019m87,eht2022sgra,eht2022sgraVI,broderick2023shadow,
chael2023innershadow,johnson2023ngeht,pesce2023ngeht} test whether the
observed QNM spectrum and shadow are consistent with Kerr. The dilation
picture predicts that any residual horizon-scale structure should (a) be
frozen on the observer's clock and (b) be concentrated in an ideal,
frozen-in, time-averaged MAD/SANE screen---consistent with the observed
steady ring and the difficulty, to date, of detecting deviations from Kerr.

\paragraph{Relation to GRMHD simulations.}
Numerical GRMHD \citep{mizuno2022grmhd,dihingia2022temp,jiang2023loops,
ricarte2023spin,cattorini2021binary,aktar2023torus} resolves the turbulent,
non-stationary flow that the analytic dilation compresses. The
$\eta'\to0$ and timescale-separation limits predicted here correspond
exactly to the numerical findings that MAD/SANE states settle into a
quasi-steady, magnetically dominated horizon region while fast reconnection
events are subdominant in the time-averaged image.

\paragraph{Advantages.}
(i) It unifies the solar-MHD balance laws with black-hole phenomenology under
one diffeomorphic covariance.
(ii) It gives a microscopic, intuitive account of hair loss: resistivity and
transients are washed out by $L\to\infty$.
(iii) It makes falsifiable predictions---frozen-in ideal MHD at the horizon,
time-dilated transients, magnification of small-scale structure---that ngEHT
time-resolved imaging and third-generation gravitational-wave detectors can
probe \citep{johnson2023ngeht,maselli2023spectroscopy}.
(iv) It is mathematically transparent: every step is an explicit coordinate
substitution on the conservation laws.

\paragraph{Disadvantages and caveats.}
(i) We have used Newtonian, flat-space MHD; the Kerr spacetime is curved and
the matching to Rindler coordinates is a local, leading-order argument, not a
full isometry of the Kerr metric.
(ii) The $L\to\infty$ limit is heuristic near the horizon; we have not
proved convergence of the full nonlinear GRMHD fields to Kerr-Newman, only
that the scaled coefficients and dominant terms do.
(iii) We assume a barotropic ideal gas with constant $\gamma$ and ignore
two-temperature electron physics, radiative transfer, and quantum/Hawking
effects, all of which modify the near-horizon thermodynamics
\citep{dihingia2022temp,jiang2023loops}.
(iv) Astrophysical charge is negligible, so the practical result is Kerr, not
Kerr--Newman.
(v) The dilation does not by itself prove stability; perturbations outside
the horizon (e.g.\ boson clouds, scalar hair) could in principle survive if
they are not accreted or radiated, and active searches for such hair
continue \citep{yunes2024gw,maselli2023spectroscopy}.

\section{Conclusion}
\label{sec:conclusion}

We have applied the uniform four-dimensional dilation
$(t,x,y,z)=(Lt',Lx',Ly',Lz')$, $L>1$, to the conducting-plasma MHD system
formed by combining the Maxwell equations with the conservation of mass,
momentum and energy, and asked what it implies for matter around a black
hole.

Answering Q1, the map $\Phi_L$ is an orientation-preserving $C^\infty$
diffeomorphism with $\det J_L=L^4>0$, and the ideal-MHD system is form
covariant under it: only the current ($\bm j=\bm j'/L$), the magnetic
diffusivity ($\eta'=\eta/L$) and the external sources ($Q'=LQ$) rescale
(Theorems~\ref{thm:covariance}).

Answering Q2, the covariance produces three observable effects: the flow
linearises to the tangent-space Alfv\'en/slow/fast-wave system on the
magnified patch (Theorem~\ref{thm:linear}); dissipative and reconnection
times stretch by $L$, giving a slow-motion view of otherwise-invisible fast
events; and wavelengths shrink to $\lambda/L$, giving an electron-microscope
view of small-scale horizon structure.

Answering Q3, as $L\to\infty$ the resistivity vanishes, the exterior time
derivative freezes, and transient matter multipoles are carried either across
the horizon or out along the jet. The remaining stationary, axisymmetric,
asymptotically flat electrovacuum is uniquely Kerr--Newman, so that the
compact object is characterised only by its mass, angular momentum and
charge. The spacetime-diffeomorphic transformation thus supplies a local,
conservation-law-based route to the no-hair theorem, complementary to the
classical uniqueness proofs and to current gravitational-wave and
horizon-imaging tests. Future work should embed the argument in full
GRMHD, prove convergence of the nonlinear fields in the $L\to\infty$ limit,
and confront the predicted time-dilated, frozen-in horizon plasma with
ngEHT and next-generation detector data.



\begin{thebibliography}{99}

\bibitem[Abbott et al.(2021)]{abbott2021gr}
R.~Abbott et al.\ (LIGO Scientific Collaboration and Virgo Collaboration).
Tests of general relativity with binary black holes from the second
LIGO--Virgo gravitational-wave transient catalog.
\emph{Phys.\ Rev.\ D}, 103:122002, 2021.

\bibitem[Acthar et al.(2023)]{aktar2023torus}
R.~Aktar, K.~Pan, and T.~Okuda.
Evolution of MHD torus and mass outflow around spinning AGN.
\emph{Astrophys.\ J.}, 2023. arXiv:2310.15501.

\bibitem[Broderick et al.(2023)]{broderick2023shadow}
A.~E. Broderick, K.~Salehi, and B.~Georgiev.
Shadow implications: what does measuring the photon ring imply for gravity?
\emph{Astrophys.\ J.}, 951:62, 2023.

\bibitem[Carter(1971)]{carter1971}
B.~Carter.
Axisymmetric black hole has only two degrees of freedom.
\emph{Phys.\ Rev.\ Lett.}, 26:331--333, 1971.

\bibitem[Cattorini et al.(2021)]{cattorini2021binary}
F.~Cattorini, B.~Giacomazzo, F.~Haardt, and M.~Colpi.
Fully general relativistic magnetohydrodynamic simulations of accretion
flows onto spinning massive black hole binary mergers.
\emph{Astrophys.\ J.}, 910:78, 2021.

\bibitem[Chael et al.(2023)]{chael2023innershadow}
A.~Chael, M.~D. Johnson, and A.~Lupsasca.
Observing the inner shadow of a black hole: a direct view of the event
horizon.
\emph{Sci.\ Adv.}, 9:eadh6270, 2023.

\bibitem[Chandrasekhar(1998)]{chandrasekhar1998}
S.~Chandrasekhar.
\emph{The Mathematical Theory of Black Holes}.
Oxford University Press, 1998.

\bibitem[Chandrasekhar(1981)]{chandrasekhar1981hydro}
S.~Chandrasekhar.
\emph{Hydrodynamic and Hydromagnetic Stability}.
Dover, 1981.

\bibitem[Dihingia et al.(2022)]{dihingia2022temp}
I.~K. Dihingia, Y.~Mizuno, C.~M. Fromm, and L.~Rezzolla.
Temperature properties in magnetised and radiatively cooled two-temperature
accretion flows onto a black hole.
\emph{Mon.\ Not.\ R.\ Astron.\ Soc.}, 515:1179, 2022.

\bibitem[Dreyer et al.(2004)]{dreyer2004}
O.~Dreyer, B.~Kelly, B.~Krishnan, L.~S. Finn, D.~Garrison, and
R.~Lopez-Aleman.
Black-hole spectroscopy: testing general relativity through gravitational-wave
observations.
\emph{Class.\ Quantum Grav.}, 21:787, 2004.

\bibitem[EHT Collaboration(2019)]{eht2019m87}
The Event Horizon Telescope Collaboration.
First M87 Event Horizon Telescope results. I. The shadow of the
supermassive black hole.
\emph{Astrophys.\ J.\ Lett.}, 875:L1, 2019.

\bibitem[EHT Collaboration(2022a)]{eht2022sgra}
The Event Horizon Telescope Collaboration.
First Sagittarius A* Event Horizon Telescope results. I. The shadow of the
supermassive black hole in the center of the Milky Way.
\emph{Astrophys.\ J.\ Lett.}, 930:L12, 2022.

\bibitem[EHT Collaboration(2022b)]{eht2022sgraVI}
The Event Horizon Telescope Collaboration.
First Sagittarius A* Event Horizon Telescope results. VI. Testing the black
hole metric.
\emph{Astrophys.\ J.\ Lett.}, 930:L17, 2022.

\bibitem[Gourgoulhon(2024)]{gourgoulhon2024nohair}
\'E.~Gourgoulhon.
The black hole no-hair theorem.
Lecture notes, Observatoire de Paris, 2024.

\bibitem[Islam et al.(2020)]{islam2020multipole}
T.~Islam, A.~K. Mehta, A.~Ghosh, V.~Varma, P.~Ajith, and
B.~S. Sathyaprakash.
Testing the no-hair nature of binary black holes using the consistency of
multipolar gravitational radiation.
\emph{Phys.\ Rev.\ D}, 102:124067, 2020.

\bibitem[Israel(1967)]{israel1967}
W.~Israel.
Event horizons in static vacuum space-times.
\emph{Phys.\ Rev.}, 164:1776--1779, 1967.

\bibitem[Jiang et al.(2023)]{jiang2023loops}
H.-X. Jiang, Y.~Mizuno, C.~M. Fromm, and A.~Nathanail.
Two-temperature GRMHD simulations of black hole accretion flows with multiple
magnetic loops.
\emph{Mon.\ Not.\ R.\ Astron.\ Soc.}, 522:4260, 2023.

\bibitem[Johnson et al.(2023)]{johnson2023ngeht}
M.~D. Johnson et al.
Key science goals for the next-generation Event Horizon Telescope.
\emph{Galaxies}, 11:61, 2023.

\bibitem[Li et al.(2023)]{li2023nohair}
S.~Li, W.-B. Han, and S.-C. Yang.
Tests of no-hair theorem with binary black-hole coalescences.
\emph{Astrophys.\ J.}, 2023. arXiv:2312.02841.

\bibitem[Loutrel et al.(2023)]{loutrel2023precession}
N.~Loutrel, R.~Brito, A.~Maselli, and P.~Pani.
Relevance of precession for tests of the black hole no-hair theorems.
\emph{Phys.\ Rev.\ D}, 109:064021, 2024. arXiv:2309.17404.

\bibitem[Maselli et al.(2023)]{maselli2023spectroscopy}
A.~Maselli, S.~Yi, L.~Pierini, V.~Vellucci, L.~Reali, L.~Gualtieri, and
E.~Berti.
Black hole spectroscopy beyond Kerr: agnostic and theory-based tests with
next-generation interferometers.
\emph{JCAP}, 02:085, 2024. arXiv:2311.14803.

\bibitem[Mizuno(2022)]{mizuno2022grmhd}
Y.~Mizuno.
GRMHD simulations and modeling for jet formation and acceleration region in
AGNs.
\emph{Universe}, 8:371, 2022.

\bibitem[Narayan et al.(2012)]{narayan2012outflow}
R.~Narayan, A.~Sadowski, R.~F. Penna, and A.~K. Kulkarni.
GRMHD simulations of magnetized advection-dominated accretion on a
non-spinning black hole: role of outflows.
\emph{Mon.\ Not.\ R.\ Astron.\ Soc.}, 426:3241, 2012.

\bibitem[Pesce et al.(2022)]{pesce2023ngeht}
D.~W. Pesce, D.~C.~M. Palumbo, A.~Ricarte, A.~E. Broderick, M.~D. Johnson,
N.~M. Nagar, P.~Natarajan, and J.~L. Gomez.
Expectations for horizon-scale supermassive black hole population studies
with the ngEHT.
\emph{Galaxies}, 10:109, 2022.

\bibitem[Priest(2014)]{priest2014mhd}
E.~Priest.
\emph{Magnetohydrodynamics of the Sun}.
Cambridge University Press, 2014.

\bibitem[Ricarte et al.(2023)]{ricarte2023spin}
A.~Ricarte, R.~Narayan, and B.~Curd.
Recipes for jet feedback and spin evolution of black holes with
strongly-magnetized super-Eddington accretion disks.
\emph{Astrophys.\ J.\ Lett.}, 954:L25, 2023.

\bibitem[Robinson(1975)]{robinson1975}
D.~C. Robinson.
Uniqueness of the Kerr black hole.
\emph{Phys.\ Rev.\ Lett.}, 34:905--906, 1975.

\bibitem[Yunes et al.(2024)]{yunes2024gw}
N.~Yunes, X.~Siemens, and K.~Yagi.
Gravitational-wave tests of general relativity with ground-based detectors and
pulsar-timing arrays.
\emph{Living Rev.\ Relativity}, 2024. arXiv:2408.05240.

\end{thebibliography}
\end{document}